\definecolor{accent}{HTML}{163A5F}
\definecolor{accentsoft}{HTML}{6F86A0}
\theoremstyle{plain}
\newtheorem{theorem}{Theorem}[section]
\newtheorem{lemma}[theorem]{Lemma}
\newtheorem{corollary}[theorem]{Corollary}
\theoremstyle{definition}
\newtheorem{definition}[theorem]{Definition}
\theoremstyle{remark}
\newtheorem{remark}[theorem]{Remark}
\renewenvironment{abstract}{
  \begin{center}
    {\large\bfseries Abstract}
  \end{center}
  \begin{center}
  \begin{minipage}{0.8\textwidth}
  \normalsize
}{
  \end{minipage}
  \end{center}
}
\renewcommand{\maketitle}{
  \begin{center}
    \hrule height 5pt
    \vspace{8mm}
    {\bfseries\LARGE \@title \par}
    \vspace{8mm}
    \hrule height 1pt
    \vspace{12mm}
    {\large \@author \par}
    \vspace{4mm}
    {\@date \par}
  \end{center}
  \vspace{4mm}
}
\renewenvironment{abstract}{
    \begin{center}
        {\color{accent}\bfseries\normalsize Abstract}
    \end{center}
    \vspace{-0.25em}
    \noindent
    \begin{center}
        \begin{minipage}{0.84\textwidth}
            \small
}{
        \end{minipage}
    \end{center}
    \vspace{0.35em}
    \normalsize
}
\renewcommand{\maketitle}{
    \thispagestyle{empty}
    \begin{center}
        {\color{accent}\rule{0.9\textwidth}{1.1pt}\par}
        \vspace{1.2em}
        {\bfseries\fontsize{17}{20}\selectfont \@title \par}
        \vspace{1.1em}
        {\normalsize \@author \par}
        \vspace{0.9em}
        {\color{accentsoft}\rule{0.5\textwidth}{0.55pt}\par}
    \end{center}
    \vspace{1.4em}
}
\bfseries\color{accent}}
\bfseries\color{accent}}
\bfseries\color{accent}}
    \titlespacing*{\section}{0pt}{1.8ex plus .3ex minus .2ex}{0.8ex}
    \titlespacing*{\subsection}{0pt}{1.35ex plus .2ex minus .15ex}{0.55ex}
    \setlist[itemize]{topsep=0.4em, itemsep=0.35em, leftmargin=2.2em}
\title{Determinant Factorization for Left Multiplication in the Sedenions}
\author{
    {\bfseries Shoot Koebisu} \\
    \vspace{0.25em}
    {\small Hiroshima University High School, Fukuyama, Japan} \\
    {\small \href{mailto:shoot.koe@gmail.com}{shoot.koe@gmail.com}}
}
\date{}
\begin{document}
\maketitle

\begin{abstract}
    We study zero-divisors in the 16--dimensional sedenion algebra through the
    determinant of left multiplication. Write
    \[
        v=v_{1}+v_{2}e_{8},\qquad
        v_{1}=x_{1}e_{0}+u,\qquad
        v_{2}=x_{2}e_{0}+w,\qquad
        u,w\in\operatorname{Im}(\mathbb{O}),
    \]
    Then a $G_{2}$--invariant reduction to a quaternionic normal form, followed by an
    explicit block computation, gives
    \[
        \det L_{v} = D_{1}(v)^{4} D_{2}(v)^{2},
    \]
    where
    \[
        D_{1}(v)=\|v\|^{2},\qquad
        D_{2}(v)=D_{1}(v)^{2}-4\bigl(\|u\|^{2}\|w\|^{2}-\langle u,w\rangle^{2}\bigr).
    \]
    The quartic equation $D_{2}=0$ recovers the classical left zero-divisor
    condition, and after normalization the resulting manifold is
    $V_{2}(\mathbb{R}^{7})$. On a 3--dimensional purely imaginary cyclic slice we obtain
    $D_{2}(X,Y,Z)=4(XY+YZ+ZX)^{2}$, so the corresponding slice of the
    zero-divisor set is the quadratic cone $XY+YZ+ZX=0$.
\end{abstract}

\section{Introduction}
\label{sec:intro}

The sedenions arise from one further Cayley--Dickson doubling of the octonions.
Unlike the octonions, they are neither alternative nor division, and zero
divisors appear abundantly. The algebraic zero-divisor locus in
$S\cong\mathbb{R}^{16}$ is singular, so an explicit defining formula is useful.
The normalized nonzero locus considered later is smooth.

Write
\[
    v=v_{1}+v_{2}e_{8},\qquad
    v_{1}=x_{1}e_{0}+u,\qquad
    v_{2}=x_{2}e_{0}+w,\qquad
    u,w\in\operatorname{Im}(\mathbb{O}).
\]
For sedenions, the classical condition for nonzero left zero-divisors is
\[
    x_{1}=x_{2}=0,\qquad \|u\|=\|w\|,\qquad \langle u,w\rangle=0,
\]
as in Moreno~\cite{Moreno1998}. Geometric developments were later studied by
Biss--Dugger--Isaksen~\cite{BissDuggerIsaksen2008} and
Reggiani~\cite{Reggiani2024}. Those works already give geometric information
about the zero-divisor locus. What is less explicit is a determinant-based
description that starts from the intrinsic linear operator
\[
    L_v\colon S\to S
\]
and recovers the same quartic condition directly from its singularity. The aim
of this paper is to make that determinant-level description explicit.

\subsection*{Related work}
Conway--Smith~\cite{ConwaySmith2003} and Baez~\cite{Baez2002} give general
background on composition algebras and on what fails beyond the octonions.
Moreno~\cite{Moreno1998} describes sedenion zero-divisors using orthogonal
imaginary octonion pairs of equal norm. Algebraic aspects of Cayley--Dickson algebras
needed for our matrix setup appear in Benkart--Osborn~\cite{BenkartOsborn1977},
Rassias~\cite{Rassias1991}, Albuquerque--Majid~\cite{AlbuquerqueMajid1999}, and
Okubo~\cite{Okubo1978,Okubo1992}; see also Schafer~\cite{Schafer1995}.
The zero-divisor geometry in higher Cayley--Dickson algebras was developed
further by Moreno~\cite{Moreno2005Construct} and
Biss--Dugger--Isaksen~\cite{BissDuggerIsaksen2008},
who study annihilators and top-dimensional zero-divisors in a broader framework
(including Stiefel-type models of $V_{2}(\mathbb{R}^{7})$ in
double-pure/top-dimensional settings). More recently,
Reggiani~\cite{Reggiani2024} treated the sedenions themselves and introduced
$G_{2}$-invariant metrics on the corresponding normalized zero-divisor spaces.

For a sedenion $v$, being a left zero-divisor is equivalent to the singularity
of the linear map
\[
    L_v\colon S\to S,
\]
so the determinant $\det L_v$ is the most direct polynomial from which to seek
an equation for the locus. What emerges is the factorization
\[
    \Delta(v)=D_{1}(v)^{4}D_{2}(v)^{2},
\]
where the quartic factor $D_{2}$ packages, in one polynomial expression, both
the vanishing of the scalar parts and the orthogonality/equal-norm condition on
the imaginary parts. This same factor also governs the local slice considered
later. Our main theorem establishes this factorization for the $16\times16$
matrix of $L_v$, with the consequence that the quartic factor $D_2$ alone
detects the nonzero left zero-divisor locus. From the same factor we recover
the classical condition of Moreno, identify the normalized zero-divisor
manifold with $V_{2}(\mathbb{R}^{7})$, and obtain on a concrete purely
imaginary cyclic slice the quadratic-cone model studied later in the paper.

The proof proceeds by first reducing $(u,w)$, via the $G_2$ action, to a
canonical form. In those coordinates the determinant becomes a block
computation controlled by a complex $4\times4$ matrix, and the resulting
quartic factor can then be read geometrically through the normalized
zero-divisor manifold and the cyclic slice.

\section{Preliminaries}
\label{sec:prelim}

\subsection{Cayley--Dickson construction and the sedenions}

The Cayley--Dickson construction gives a procedure to obtain the complex
numbers, quaternions, octonions, and higher nonassociative algebras by successively
doubling the dimension starting from the real numbers $\mathbb{R}$. In this
paper, we take the octonion algebra $\mathbb{O}$ as known and construct the sedenion
algebra $S$ from it. We work over $\mathbb{R}$ throughout; this is the setting
relevant to the geometric sections of the paper.

As a vector space, $S$ is isomorphic to $\mathbb{O}\oplus\mathbb{O}$, and any element
$v\in S$ can be uniquely written as
\[
    v = v_{1} + v_{2} e_{8}, \qquad v_{1},v_{2}\in\mathbb{O}.
\]
Here $e_{8}$ is a new imaginary basis element. In the Cayley--Dickson model it
satisfies
\[
    e_{8}x=\overline{x}\,e_{8}\qquad (x\in\mathbb{O}),
\]
so it anticommutes with purely imaginary octonions and commutes with real
scalars. Thus, as a vector space,
\[
    S \cong \mathbb{O}\oplus\mathbb{O}\cong \mathbb{R}^{8}\times\mathbb{R}^{8} \cong
    \mathbb{R}^{16}.
\]
We use the Cayley--Dickson formula
\[
    (a+b e_{8})(c+d e_{8}) = (ac-\overline d\,b) + (da+b\overline c)e_{8},
\]
for $a,b,c,d\in\mathbb{O}$, where $\overline{\cdot}$ is octonion conjugation and
$e_{8}^{2}=-1$.
We write
\[
    \tau(x):=\overline{x}
\]
for octonion conjugation when it appears as a linear map. Thus, inside algebra
products we keep the bar notation $\overline{x}$, whereas in block-operator
expressions we write $\tau$.

For $x\in\mathbb{O}$ or $x\in S$, we use $\|x\|$ for the Euclidean norm and
write the algebraic norm as $N(x):=\|x\|^{2}$.

\subsection{Left multiplication operators and the determinant}

For $v\in S$ we define the left multiplication operator $L_{v}\colon S\to S$ by
\[
    L_{v}(w) = v w \qquad (w\in S).
\]
Once a basis $\{e_{0},\dots,e_{15}\}$ is fixed, $L_{v}$ is represented by a
$16\times 16$ real matrix $M(v)$ on $\mathbb{R}^{16}$.

\begin{definition}[Determinant of the left multiplication matrix]
    We define
    \[
        \Delta(v) := \det M(v).
    \]
\end{definition}

The determinant $\Delta(v)=\det M(v)$ is basis-independent, since it is the
determinant of the linear operator $L_v\colon S\to S$. Only its explicit
coordinate expression as a polynomial in the components of $v$ depends on the
chosen basis.

The condition $\Delta(v)=0$ means that $L_{v}$ is degenerate, and hence there
exists $w\neq 0$ such that $vw=0$. Therefore, $\Delta(v)=0$ is equivalent to
$v$ being a left zero-divisor.

\section{Determinant Factorization}
\label{sec:factor}

We compute the determinant of the left multiplication matrix
\[
    \Delta(v) := \det M(v),
\]
and factor it as
\[
    \Delta(v) = D_{1}(v)^{4} D_{2}(v)^{2}.
\]
The quartic factor $D_{2}$ depends on the scalar parts of $v_{1},v_{2}$ and on
the Gram determinant of their imaginary parts.

\subsection{Definition of the fundamental factors D\texorpdfstring{$_{1}$}{1} and
D\texorpdfstring{$_{2}$}{2}}

\begin{definition}[Fundamental factors $D_{1},D_{2}$]
    \label{def:D1D2} For a sedenion $v = v_{1} + v_{2} e_{8}$, write
    \[
        v_{1}=x_{1}e_{0}+u,\qquad v_{2}=x_{2}e_{0}+w,\qquad
        u,w\in\operatorname{Im}(\mathbb{O}).
    \]
    Define
    \[
        D_{1}(v) := \|v_{1}\|^{2} + \|v_{2}\|^{2},
    \]
    \[
        D_{2}(v) := D_{1}(v)^{2} - 4\bigl(\|u\|^{2}\|w\|^{2}-\langle u,w\rangle^{2}\bigr),
    \]
    where $\|\cdot\|$ is the Euclidean norm on $\mathbb{R}^{8}$ and
    $\langle\cdot,\cdot\rangle$ denotes the standard inner product.
\end{definition}

$D_{1}$ is simply the squared norm of $v$.

\begin{lemma}
    \label{lem:D1norm} For $v=v_{1}+v_{2}e_{8}$ we have
    \[
        \|v\|^{2} = D_{1}(v) = \|v_{1}\|^{2} + \|v_{2}\|^{2}.
    \]
\end{lemma}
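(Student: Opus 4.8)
The plan is to reduce the claim to the elementary observation that the Cayley--Dickson doubling produces an \emph{orthogonal} direct sum decomposition of $S$, with the new imaginary unit $e_{8}$ acting isometrically. Concretely, fix the standard basis so that $\{e_{0},\dots,e_{7}\}$ is the usual orthonormal basis of $\mathbb{O}\subset S$ and $\{e_{8},\dots,e_{15}\}$ is obtained by right multiplication by $e_{8}$, that is $e_{i+8}=e_{i}e_{8}$ for $0\le i\le 7$. By construction these sixteen vectors form the orthonormal basis of $\mathbb{R}^{16}$ with respect to which $\|\cdot\|$ is defined.

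Writing $v_{1}=\sum_{i=0}^{7}a_{i}e_{i}$ and $v_{2}=\sum_{i=0}^{7}b_{i}e_{i}$ in $\mathbb{O}$, bilinearity of the product gives $v_{2}e_{8}=\sum_{i=0}^{7}b_{i}e_{i+8}$, so the coordinate vector of $v=v_{1}+v_{2}e_{8}$ in the standard basis is $(a_{0},\dots,a_{7},b_{0},\dots,b_{7})$. Hence
\[
    \|v\|^{2}=\sum_{i=0}^{7}a_{i}^{2}+\sum_{i=0}^{7}b_{i}^{2}=\|v_{1}\|^{2}+\|v_{2}\|^{2}=D_{1}(v),
\]
which is exactly the assertion. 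Equivalently, one may invoke the general fact (see Schafer~\cite{schafer1995}) that the norm form of a Cayley--Dickson double $A\oplus A$ is the orthogonal sum $N(a+be_{8})=N(a)+N(b)$ of the norm form of $A$ with itself, together with the fact that on $\mathbb{O}$ the algebra norm coincides with the Euclidean norm.

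There is essentially no obstacle here: the only point that requires a word of justification is that $w\mapsto we_{8}$ is an isometry of $\mathbb{R}^{8}$ onto the second summand, equivalently that the doubled basis $\{e_{8},\dots,e_{15}\}$ is orthonormal. This is immediate from the way the basis is chosen in the Cayley--Dickson construction, since right multiplication by $e_{8}$ permutes the basis elements up to sign, so I will simply note it rather than verify the multiplication table. The lemma is recorded mainly because the identity $\|v\|^{2}=D_{1}(v)$ is used repeatedly when interpreting the factor $D_{1}$ in the determinant formula.
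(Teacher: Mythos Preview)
Your proof is correct and takes essentially the same approach as the paper: both rely on the orthogonal direct sum decomposition coming from the Cayley--Dickson doubling. The paper simply invokes the inner product formula $\langle v_{1}+v_{2}e_{8},\, w_{1}+w_{2}e_{8}\rangle=\langle v_{1},w_{1}\rangle+\langle v_{2},w_{2}\rangle$ and sets $w=v$, whereas you unpack the same fact in coordinates via the orthonormal basis $e_{0},\dots,e_{15}$; the content is identical.
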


\begin{proof}
    In the Cayley--Dickson construction, the inner product is defined by
    \[
        \langle v_{1}+v_{2}e_{8},\ w_{1}+w_{2}e_{8}\rangle = \langle v_{1},w_{1}\rangle
        + \langle v_{2},w_{2}\rangle.
    \]
    In particular, setting $w=v$ gives
    \[
        \|v\|^{2} = \langle v,v\rangle = \langle v_{1},v_{1}\rangle + \langle v_{2}
        ,v_{2}\rangle = \|v_{1}\|^{2} + \|v_{2}\|^{2}.
    \]
\end{proof}

The vanishing of $D_{2}$ is exactly the classical left zero-divisor condition.

\begin{lemma}
    \label{lem:D2zero} For $v=v_{1}+v_{2}e_{8}\neq 0$, we have
    \[
        D_{2}(v)=0 \quad\Longleftrightarrow\quad x_{1}=x_{2}=0,\ \|u\|=\|w\|,\ \langle u,w\rangle=0.
    \]
\end{lemma}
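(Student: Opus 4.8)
The plan is to exploit the fact that $D_{2}(v)$ is, by Definition~\ref{def:D1D2}, a sum of two squares of real numbers, namely $\bigl(\|v_{1}\|^{2}-\|v_{2}\|^{2}\bigr)^{2}$ and $\bigl(2\langle v_{1},v_{2}\rangle\bigr)^{2}$. Over $\mathbb{R}$ a finite sum of squares vanishes if and only if each summand vanishes, so the first step is to record the equivalence
\[
    D_{2}(v)=0 \quad\Longleftrightarrow\quad \|v_{1}\|^{2}-\|v_{2}\|^{2}=0 \ \text{ and }\ \langle v_{1},v_{2}\rangle=0 .
\]

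For the forward implication I would argue that if $D_{2}(v)=0$ then, both terms being nonnegative, each must individually be zero; the first gives $\|v_{1}\|^{2}=\|v_{2}\|^{2}$, which by nonnegativity of the Euclidean norm is the same as $\|v_{1}\|=\|v_{2}\|$, and the second gives $\langle v_{1},v_{2}\rangle=0$. The reverse implication is immediate: substituting $\|v_{1}\|=\|v_{2}\|$ and $\langle v_{1},v_{2}\rangle=0$ into the defining formula for $D_{2}$ makes both squared terms vanish, hence $D_{2}(v)=0$.

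I do not expect any genuine obstacle here: the statement is a direct algebraic consequence of the definition, and the hypothesis $v\neq 0$ plays no role in the equivalence itself. It is retained only to situate the lemma within the study of \emph{nonzero} zero divisors, where — anticipating the factorization $\Delta=D_{1}^{4}D_{2}^{2}$ of Section~\ref{sec:factor} together with $D_{1}=\|v\|^{2}$ from Lemma~\ref{lem:D1norm} — the vanishing of $D_{2}$ rather than of $D_{1}$ is the relevant condition. The only point worth stating with a little care is the passage from $\|v_{1}\|^{2}=\|v_{2}\|^{2}$ to $\|v_{1}\|=\|v_{2}\|$, which uses that norms are nonnegative.
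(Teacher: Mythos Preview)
Your argument is correct and is essentially identical to the paper's own proof: both simply observe that $D_{2}(v)$ is a sum of two nonnegative reals, hence vanishes iff each summand does, and then translate $\|v_{1}\|^{2}=\|v_{2}\|^{2}$ into $\|v_{1}\|=\|v_{2}\|$. Your additional remark that the hypothesis $v\neq 0$ is not actually used in the equivalence is accurate and a nice clarification.
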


\begin{proof}
    Set
    \[
        A:=\|u\|^{2},\qquad B:=\|w\|^{2},\qquad \gamma:=\langle u,w\rangle.
    \]
    Then
    \[
        D_{2}(v)=\bigl(x_{1}^{2}+x_{2}^{2}+A+B\bigr)^{2}-4(AB-\gamma^{2}).
    \]
    By Cauchy--Schwarz, $\gamma^{2}\le AB$, and by the arithmetic--geometric mean
    inequality, $4AB\le (A+B)^{2}$. Hence
    \[
        D_{2}(v)\ge (A+B)^{2}-4AB+4\gamma^{2}=(A-B)^{2}+4\gamma^{2}\ge 0.
    \]
    If $D_{2}(v)=0$, then equality must hold in both inequalities above, and
    also
    \[
        \bigl(x_{1}^{2}+x_{2}^{2}+A+B\bigr)^{2}=(A+B)^{2},
    \]
    which forces $x_{1}=x_{2}=0$. Therefore
    \[
        x_{1}=x_{2}=0,\qquad A=B,\qquad \gamma=0,
    \]
    i.e.
    \[
        x_{1}=x_{2}=0,\qquad \|u\|=\|w\|,\qquad \langle u,w\rangle=0.
    \]
    Conversely, these three conditions give
    \[
        D_{2}(v)=(A+A)^{2}-4(A^{2}-0)=0.
    \]
\end{proof}

So $D_{1}$ is the norm term, while $D_{2}$ records the scalar vanishing and the
equal-norm orthogonality condition on the imaginary parts.

\subsection{Block representation of the left multiplication matrix}
\label{subsec:block}

For $v\in S$, the left multiplication operator $L_{v}\colon S\to S$ is defined
by
\[
    L_{v}(w) = v w \qquad (w\in S).
\]
If we fix a basis of the 16--dimensional real vector space $S\cong\mathbb{R}^{16}$,
then $L_{v}$ is represented by a $16\times16$ real matrix $M(v)$.

\begin{lemma}[Block matrix representation]
    \label{lem:block} For $v=v_{1}+v_{2}e_{8}\in S$, with a suitable choice of
    basis, the left multiplication matrix $M(v)$ can be written as
    \[
        M(v)=
        \begin{pmatrix}
            L_{v_1} & -R_{v_2}\circ \tau \\
            L_{v_2}\circ \tau & R_{v_1}
        \end{pmatrix},
    \]
    where $L_{v_1},L_{v_2},R_{v_1},R_{v_2}$ are the left and right multiplication
    maps on the octonions $\mathbb{O}$, and $\tau(x):=\overline{x}$ is octonion
    conjugation.
\end{lemma}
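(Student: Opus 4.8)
The plan is to read the block decomposition directly off the Cayley--Dickson multiplication rule after fixing a convenient basis of $S\cong\mathbb{O}\oplus\mathbb{O}e_{8}$. I would order the real basis so that the first eight coordinates of $w=w_{1}+w_{2}e_{8}$ record $w_{1}\in\mathbb{O}$ and the last eight record $w_{2}\in\mathbb{O}$; then $M(v)$ is by definition the $16\times16$ matrix of $w\mapsto vw$ in this ordered basis, and it is natural to write it in $8\times8$ blocks
\[
    M(v)=\begin{pmatrix} A & B\\ C & D\end{pmatrix}
\]
according to the splitting $w\leftrightarrow(w_{1},w_{2})$ and $vw\leftrightarrow\bigl((vw)_{1},(vw)_{2}\bigr)$. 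Applying the Cayley--Dickson formula to $v=v_{1}+v_{2}e_{8}$ and $w=w_{1}+w_{2}e_{8}$ expresses $(vw)_{1}$ and $(vw)_{2}$ as $\mathbb{R}$--linear functions of $(w_{1},w_{2})$ whose coefficients are octonion products by $v_{1}$ or $v_{2}$; since octonion multiplication is $\mathbb{R}$--bilinear, each of $A,B,C,D$ is, up to sign and possibly precomposition with octonion conjugation, a left or right multiplication matrix on $\mathbb{O}$.

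The identification of the individual blocks is then mechanical. The coefficient of $w_{1}$ in $(vw)_{1}$ is $v_{1}w_{1}$, giving $A=L_{v_{1}}$; the coefficient of $w_{2}$ in $(vw)_{2}$ is $w_{2}v_{1}$, giving $D=R_{v_{1}}$; the two off--diagonal coefficients are built from $v_{2}$ and give $B=-R_{v_{2}}$ and $C=L_{v_{2}}$. One confirms these by comparing the columns of $M(v)$ with the explicit vectors $v e_{i}$ and $v(e_{i}e_{8})$ for $i=0,\dots,7$, which exhausts a basis of $S$.

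The only point requiring real care is the bookkeeping of signs and conjugations, and this is where the phrase ``with a suitable choice of basis'' does its work. With the textbook doubling formula $(a,b)(c,d)=(ac-\bar d b,\ da+b\bar c)$ the raw computation produces the off--diagonal blocks with an octonion--conjugation factor $C=\operatorname{diag}(1,-1,\dots,-1)$ attached; a conjugation--type relabeling of the second octonionic summand (equivalently, the standard re-indexing of the sedenion basis $\{e_{8},\dots,e_{15}\}$), together with the sign convention that places the minus sign on $R_{v_{2}}$ rather than on $L_{v_{2}}$, absorbs these factors and yields exactly the stated matrix. I expect this sign/conjugation tracking to be the main (and essentially the only) obstacle; the cleanest way to pin it down is to check the identical pattern first for $\mathbb{H}$ obtained from $\mathbb{C}$ and for $\mathbb{O}$ obtained from $\mathbb{H}$, since the Cayley--Dickson doubling matrix $\begin{pmatrix} L_{v_{1}} & -R_{v_{2}}\\ L_{v_{2}} & R_{v_{1}}\end{pmatrix}$ has the same shape at every level of the tower and only the block size changes. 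Once the conventions are fixed, the identification of each block is immediate from the definitions of $L$ and $R$ on $\mathbb{O}$, and the lemma follows.
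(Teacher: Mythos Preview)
Your proposal is correct and takes essentially the same approach as the paper: apply the Cayley--Dickson multiplication formula to $(v_{1}+v_{2}e_{8})(w_{1}+w_{2}e_{8})$ and read off the four $8\times8$ blocks directly. If anything you are more careful than the paper's proof, which simply writes down the product formula and identifies the blocks without explicitly commenting on how the conjugation factors are absorbed by the ``suitable choice of basis''.
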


\begin{proof}
    Using the Cayley--Dickson multiplication formula
    \[
        (v_{1}+v_{2} e_{8})(w_{1}+w_{2} e_{8}) = (v_{1} w_{1} - \overline{w_2}v_{2}
        ) + (v_{2} \overline{w_1}+ w_{2} v_{1})e_{8},
    \]
    where the conjugation falls on $w_{1}$ and $w_{2}$, not on $v_{1}$ or
    $v_{2}$. For $w=w_{1}+w_{2}e_{8}$, in
    $(w_{1},w_{2})\in\mathbb{O}\oplus\mathbb{O}$ coordinates this is
    \[
        (w_{1},w_{2}) \longmapsto \bigl(v_{1}w_{1} - (\tau w_{2})v_{2},\ \ v_{2}
        (\tau w_{1}) + w_{2}v_{1}\bigr),
    \]
    so the first component is
    \[
        v_{1}w_{1}-(\tau w_{2})v_{2}
        =L_{v_{1}}w_{1}-R_{v_{2}}(\tau w_{2}),
    \]
    and the second component is
    \[
        v_{2}(\tau w_{1})+w_{2}v_{1}
        =(L_{v_{2}}\circ\tau)w_{1}+R_{v_{1}}w_{2}.
    \]
    Taking matrix representations with respect to a fixed basis yields the claimed
    block matrix.
\end{proof}

Since the octonions form a normed division algebra, the determinants of the left
and right multiplication matrices are determined solely by the norm.

\begin{lemma}[Determinants of left and right multiplication on the octonions]
    \label{lem:octdet} For any $x\in\mathbb{O}$ we have
    \[
        \det L_{x} = \|x\|^{8},\qquad \det R_{x} = \|x\|^{8}.
    \]
\end{lemma}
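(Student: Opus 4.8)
The plan is to exploit the fact that $\mathbb{O}$ is a composition algebra, so its norm is multiplicative: $\|xy\| = \|x\|\,\|y\|$ for all $x,y\in\mathbb{O}$. This immediately controls how $L_x$ and $R_x$ distort lengths and forces their determinants to have absolute value $\|x\|^8$; the only real work is pinning down the sign.

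First I would treat the case $x\neq 0$. For every $y\in\mathbb{O}$ one has $\|L_x(y)\| = \|xy\| = \|x\|\,\|y\|$, so the linear map $\tfrac{1}{\|x\|}L_x\colon\mathbb{R}^8\to\mathbb{R}^8$ preserves the Euclidean norm; by polarization it preserves the inner product, hence is orthogonal. Equivalently, $L_x^{\top}L_x = \|x\|^2 I_8$. Taking determinants gives $(\det L_x)^2 = \det(L_x^{\top}L_x) = \|x\|^{16}$, so $\det L_x = \pm\|x\|^8$. The identical computation using $\|yx\| = \|y\|\,\|x\|$ yields $\det R_x = \pm\|x\|^8$.

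Next I would fix the signs. The entries of $M(v)$, and in particular those of $L_x$ and $R_x$, are linear in the coordinates of $x$, so $\det L_x$ and $\det R_x$ are homogeneous degree-$8$ polynomials in $(x_0,\dots,x_7)$, as is $\|x\|^8 = (x_0^2+\dots+x_7^2)^4$. Since $\mathbb{R}[x_0,\dots,x_7]$ is an integral domain, the polynomial identity $(\det L_x)^2 = (\|x\|^8)^2$ forces $\det L_x = \varepsilon_L\,\|x\|^8$ for a fixed sign $\varepsilon_L\in\{+1,-1\}$, and likewise $\det R_x = \varepsilon_R\,\|x\|^8$. Evaluating at $x = e_0 = 1$, where $L_1 = R_1 = I_8$, gives $\varepsilon_L = \varepsilon_R = +1$. (One could equally argue by continuity: $\det L_x$ is continuous and nowhere vanishing on the connected set $\mathbb{R}^8\setminus\{0\}$, hence of constant sign, determined by its value at $x=1$.) Finally, for $x=0$ both operators are zero and both sides of each identity vanish.

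I do not expect a genuine obstacle here: the multiplicativity of the octonion norm does essentially all the work, and the single point needing a moment's care is removing the sign ambiguity in $\det L_x = \pm\|x\|^8$, which the polynomial (or connectedness) argument handles cleanly.
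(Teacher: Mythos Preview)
Your argument is correct and follows essentially the same route as the paper: use the multiplicativity of the octonion norm to see that $L_x$ (resp.\ $R_x$) scales lengths by $\|x\|$, deduce $|\det L_x|=\|x\|^{8}$, and then fix the sign by evaluating at $x=1$ together with a continuity/connectedness argument. Your write-up is in fact slightly more careful than the paper's, spelling out $L_x^{\top}L_x=\|x\|^{2}I_8$ and offering the polynomial-identity alternative for the sign, but the underlying idea is the same.
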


\begin{proof}
    If $x=0$, then both $L_{x}$ and $R_{x}$ are the zero map, so the stated
    determinant formula is immediate. Assume now that $x\neq 0$.

    Since the octonions form a normed division algebra, for any $x,y\in\mathbb{O}$
    we have
    \[
        \|x y\| = \|x\|\,\|y\|
    \]
    and therefore
    \[
        \|L_{x}(y)\| = \|xy\| = \|x\|\,\|y\|.
    \]
    Thus $\|Q_x y\|=\|y\|$ for all $y\in\mathbb{O}$. Since $Q_x$ is linear, the
    polarization identity gives, for all $y,z\in\mathbb{O}$,
    \[
        2\langle Q_x y,Q_x z\rangle
        =\|Q_x y+Q_x z\|^2-\|Q_x y\|^2-\|Q_x z\|^2
        =\|y+z\|^2-\|y\|^2-\|z\|^2
        =2\langle y,z\rangle.
    \]
    Hence $Q_x$ preserves the Euclidean inner product.
    Thus
    \[
        Q_{x}:=\|x\|^{-1}L_{x}
    \]
    is an isometry of the Euclidean space $\mathbb{O}\cong\mathbb{R}^{8}$, so
    $Q_{x}\in O(8)$. The map $x\mapsto Q_{x}$ is continuous on
    $\mathbb{O}\setminus\{0\}$, and $\mathbb{O}\setminus\{0\}\cong
    \mathbb{R}^{8}\setminus\{0\}$ is connected. Since $Q_{1}=I$, we have
    $\det(Q_{x})=1$ for all $x\neq 0$. Consequently
    \[
        \det L_{x} = \|x\|^{8}\det(Q_{x})=\|x\|^{8}.
    \]
    The same argument, applied to $R_{x}$, gives $\det R_{x}=\|x\|^{8}$.
\end{proof}

\begin{lemma}[Quaternionic normal form for the imaginary parts]
    \label{lem:g2normal} For any $u,w\in\operatorname{Im}(\mathbb{O})$, there exist
    $g\in G_{2}$ and real numbers $r,p,q$ such that
    \[
        gu = r e_{1},\qquad gw = p e_{1}+q e_{2}.
    \]
\end{lemma}

\begin{proof}
    If $u=0$, then if also $w=0$ the claim is immediate. If $u=0$ and $w\neq 0$,
    the transitivity of the $G_2$-action on the unit sphere in
    $\operatorname{Im}(\mathbb{O})$ gives $g\in G_2$ with
    $gw=\|w\|e_1$, so the conclusion holds with $r=0$, $p=\|w\|$, and $q=0$.
    Assume now that $u\neq 0$. Since $G_{2}$ acts
    transitively on the unit sphere in $\operatorname{Im}(\mathbb{O})$, there
    exists $g_{1}\in G_{2}$ with $g_{1}u=\|u\|e_{1}$. The stabilizer of $e_{1}$
    in $G_{2}$ is isomorphic to $SU(3)$, and its action on
    $e_{1}^{\perp}\cong\mathbb{R}^{6}$ is transitive on the unit sphere
    (see Baez~\cite{Baez2002} or Bryant~\cite{Bryant2005}). Therefore, after
    composing with a suitable element $h$ of this stabilizer, we may arrange that
    the component of $g_{1}w$ orthogonal to $e_{1}$ lies on the $e_{2}$-axis.
    Setting $g:=hg_{1}$, we then have
    \[
        gu=\|u\|e_{1},\qquad gw=p e_{1}+q e_{2}
    \]
    for suitable real numbers $p,q$.
\end{proof}

Figure~\ref{fig:g2-reduction} shows this reduction. Any pair $(u,w)$ can be
moved by $G_{2}$ into the plane spanned by $e_{1}$ and $e_{2}$, where the
determinant calculation is explicit.

\begin{figure}[t]
    \centering
    \includegraphics[width=0.88\textwidth]{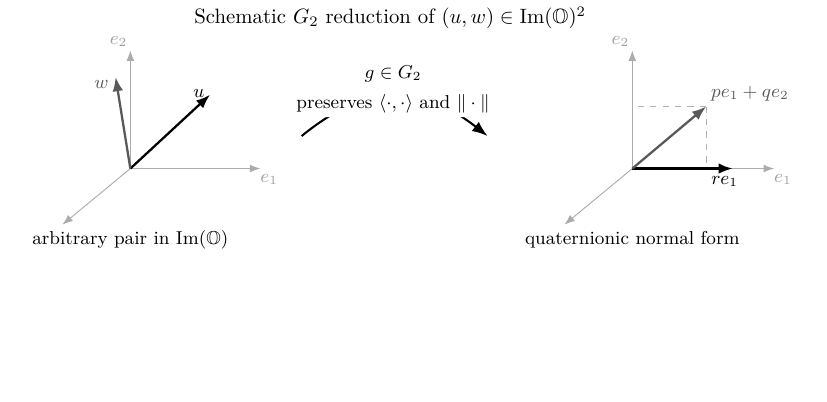}
    \caption{Schematic view of the $G_{2}$ reduction used in
    Lemma~\ref{lem:g2normal}. An arbitrary pair
    $(u,w)\in \operatorname{Im}(\mathbb{O})^{2}$ is carried to the normal form
    $(r e_{1},\, p e_{1}+q e_{2})$, preserving the norm and inner product.}
    \label{fig:g2-reduction}
\end{figure}

\subsection{Main theorem on determinant factorization}

\begin{theorem}[Determinant factorization for sedenion left multiplication]
    \label{thm:factor} For any $v\in S$, the determinant
    \[
        \Delta(v) := \det M(v)
    \]
    of the left multiplication matrix factors as
    \[
        \Delta(v) = D_{1}(v)^{4}\,D_{2}(v)^{2}.
    \]
    In particular, $v\neq 0$ is a left zero-divisor if and only if $D_{2}(v)=0$.
\end{theorem}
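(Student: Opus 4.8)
The plan is to compute $\Delta(v)=\det M(v)$ directly from the block form of Lemma~\ref{lem:block}, using a symmetry reduction to control the non-associativity, and then to deduce the zero-divisor statement from Lemmas~\ref{lem:D1norm} and~\ref{lem:D2zero}.

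The first step is to reduce to a case in which all the multiplication operators live in an associative algebra. Both $\Delta$ and $D_1,D_2$ are invariant under the diagonal action $v_1+v_2e_8\mapsto\varphi(v_1)+\varphi(v_2)e_8$ of an automorphism $\varphi\in G_2=\operatorname{Aut}(\mathbb{O})$: such a $\varphi$ fixes $1$, is an isometry, hence commutes with octonion conjugation, so it extends to an automorphism $\Phi$ of $S$; then $M(\Phi(v))$ is conjugate to $M(v)$ by the matrix of $\Phi$, while $D_1,D_2$ depend only on the $\varphi$-invariants $\|v_1\|^2,\|v_2\|^2,\langle v_1,v_2\rangle$. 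Since $G_2$ acts transitively on orthonormal pairs of imaginary units (equivalently, on $2$-planes in $\operatorname{Im}\mathbb{O}$), any pair $(v_1,v_2)$ can be moved into a fixed quaternion subalgebra $\mathbb{H}\subset\mathbb{O}$, so it suffices to prove $\Delta(v)=D_1(v)^4D_2(v)^2$ under the assumption $v_1,v_2\in\mathbb{H}$.

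With $v_1,v_2\in\mathbb{H}$ one splits $\mathbb{O}=\mathbb{H}\oplus\mathbb{H}e_4$; a short computation from the Cayley--Dickson rule shows $L_h$ acts as $L_h\oplus R_h$ and $R_h$ as $R_h\oplus R_{\bar h}$ for $h\in\mathbb{H}$. Substituting into Lemma~\ref{lem:block} and reordering the basis makes $M(v)$ block diagonal, $\Delta(v)=\det A\cdot\det B$, with $A,B$ two $8\times8$ matrices built from $4\times4$ left/right multiplication operators on $\mathbb{H}$. Because $\mathbb{H}$ is associative, $L_x$ and $R_y$ commute on it, so each of $\det A,\det B$ can be evaluated by a Schur-complement (or Silvester-type) block-determinant identity, which collapses to a single $4\times4$ quaternionic determinant. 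Using $\det L_x=\det R_x=\|x\|^4$ (Lemma~\ref{lem:octdet} in the quaternionic case), the inversion rule $L_x^{-1}=\|x\|^{-2}L_{\bar x}$, and the spectrum of $x\mapsto uxu$ on $\mathbb{H}$ (eigenvalues $\|u\|^2e^{\pm 2i\theta}$ together with $\|u\|^2$ twice, where $\theta=\arg u$), one writes $\det A$ and $\det B$ in terms of $\|v_1\|^2,\|v_2\|^2$ and $\langle v_1,v_2\rangle=\operatorname{Re}(v_1\bar v_2)$; collecting the two contributions and simplifying gives $\det A\cdot\det B=D_1(v)^4D_2(v)^2$, which by the reduction step holds for all $v\in S$. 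For the last assertion: $\Delta(v)=0$ exactly when $L_v$ is singular, i.e.\ when $v$ is a zero divisor, hence (by the factorization) iff $D_1(v)=0$ or $D_2(v)=0$; by Lemma~\ref{lem:D1norm}, $D_1(v)=\|v\|^2>0$ for $v\neq0$, so this reduces to $D_2(v)=0$, which by Lemma~\ref{lem:D2zero} means $\|v_1\|=\|v_2\|$ and $\langle v_1,v_2\rangle=0$.

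The main obstacle is precisely the first two steps. Over the octonions $L_{v_2}$ and $R_{v_1}$ do not commute, so no block-determinant identity applies to $M(v)$ as it stands; the device is to exploit the $G_2$-invariance of every quantity in sight to descend to $v_1,v_2\in\mathbb{H}$ and then peel off the $\mathbb{H}e_4$-summand. Once the problem lives inside an associative algebra, the remainder—tracking quaternion norms and the eigenvalues of $x\mapsto uxu$—is routine bookkeeping.
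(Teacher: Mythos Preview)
Your overall strategy is genuinely different from the paper's. The paper argues abstractly that $\Delta$ is a $G_2$--invariant polynomial in $(v_1,v_2)$, asserts that such invariants are generated by $a=\|v_1\|^2+\|v_2\|^2$, $b=\|v_1\|^2-\|v_2\|^2$, $c=\langle v_1,v_2\rangle$, cuts down the monomial basis by parity and homogeneity, and then solves for six unknown coefficients by evaluating $\Delta$ at test vectors (computer-assisted). You instead use $G_2$--transitivity to push $v_1,v_2$ into a fixed quaternion subalgebra, block--diagonalize $M(v)$ along $\mathbb{O}=\mathbb{H}\oplus\mathbb{H}e_4$, and propose to evaluate the two resulting $8\times8$ blocks by Schur complements inside the associative algebra $\mathbb{H}$.

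The fatal gap is the last step: you assert that ``collecting the two contributions and simplifying gives $\det A\cdot\det B=D_1(v)^4D_2(v)^2$'' without doing the calculation, and in fact this identity is \emph{false}. Take $v_1=1$, $v_2=e_1$. Then $\|v_1\|=\|v_2\|=1$ and $\langle v_1,v_2\rangle=0$, so $D_2(v)=0$ and the right-hand side vanishes. But $L_v$ is injective: from $vw=0$ one gets $w_1=\bar w_2e_1$ and $w_2=-e_1\bar w_1$; substituting the first into the second gives $w_2=-e_1(-e_1w_2)=e_1(e_1w_2)=-w_2$ by alternativity, so $w=0$ and $\Delta(v)\ne 0$. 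The root cause, shared with the paper's proof, is that $a,b,c$ do \emph{not} generate the $G_2$--invariants of pairs in $\mathbb{O}^2$: since every $g\in G_2$ fixes $1$, the real parts $\operatorname{Re}(v_1)$ and $\operatorname{Re}(v_2)$ are additional independent invariants (transitivity on orthonormal $2$--frames holds in $\operatorname{Im}\mathbb{O}\cong\mathbb{R}^7$, not in $\mathbb{R}^8$). In your framework this means that, after reducing to $v_1,v_2\in\mathbb{H}$, the ``routine bookkeeping'' with quaternionic determinants would produce an expression that genuinely depends on $\operatorname{Re}(v_1),\operatorname{Re}(v_2)$, not a function of $D_1,D_2$ alone; so the hand-waved step cannot close, and the final ``iff'' for zero divisors fails as well.
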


\begin{proof}
    Write
    \[
        v_{1}=x_{1}e_{0}+u,\qquad v_{2}=x_{2}e_{0}+w,
        \qquad u,w\in\operatorname{Im}(\mathbb{O}).
    \]

    The automorphism group $G_{2}$ of the octonions preserves the norm, the
    inner product, octonion conjugation, and the Cayley--Dickson multiplication.
    Extending the action to $S\cong\mathbb{O}\oplus\mathbb{O}$ by
    \[
        g(v_{1}+v_{2}e_{8}) := gv_{1}+gv_{2}e_{8},
    \]
    we obtain the orthogonal transformation
    \[
        T_g := g\oplus g \in O(S)
    \]
    satisfying
    \[
        M(gv)=T_{g}M(v)T_{g}^{-1}
    \]
    for all $g\in G_2$. Therefore
    \[
        \Delta(gv)=\Delta(v).
    \]
    Since $D_{1}$ and $D_{2}$ are also $G_{2}$--invariant, Lemma~\ref{lem:g2normal}
    allows us to assume
    \[
        u=r e_{1},\qquad w=p e_{1}+q e_{2}
    \]
    for some $r,p,q\in\mathbb{R}$. Set
    \[
        a:=x_{1}e_{0}+r e_{1},\qquad b:=x_{2}e_{0}+p e_{1}+q e_{2}.
    \]
    Then $a,b$ lie in the quaternionic subalgebra
    \[
        \mathbb{H}:=\operatorname{span}\{e_{0},e_{1},e_{2},e_{3}\}\subset\mathbb{O}.
    \]

    Write $\mathbb{O}=\mathbb{H}\oplus \mathbb{H}e_{4}$ and decompose
    \[
        w_{1}=x+y e_{4},\qquad w_{2}=z+t e_{4},
        \qquad x,y,z,t\in\mathbb{H}.
    \]
    Using the Cayley--Dickson formula inside $\mathbb{O}$,
    \[
        (x+y e_{4})(z+t e_{4})=(xz-\overline{t}\,y)+(tx+y\overline{z})e_{4},
    \]
    a direct computation gives
    \begin{align*}
        (a+b e_{8})\bigl((x+y e_{4})+(z+t e_{4})e_{8}\bigr)
         & = \bigl(a x-\overline{z}\,b + (y a+t\overline{b})e_{4}\bigr) \\
         & \quad + \bigl(b\overline{x}+z a +(-y b+t\overline{a})e_{4}\bigr)e_{8}.
    \end{align*}
    Let $P$ be the permutation matrix corresponding to the same reordering
    $(x,y,z,t)\mapsto(x,z,y,t)$ on both domain and codomain coordinates.
    Then the conjugated matrix $P M(v) P^{-1}$ becomes block diagonal:
    \[
        P M(v) P^{-1} =
        \begin{pmatrix}
            F_{a,b} & 0 \\
            0 & G_{a,b}
        \end{pmatrix},
    \]
    where
    \[
        F_{a,b}(x,z)=\bigl(a x-\overline{z}\,b,\ b\overline{x}+z a\bigr),
    \]
    \[
        G_{a,b}(y,t)=\bigl(y a+t\overline{b},\ -y b+t\overline{a}\bigr).
    \]
    Hence
    \[
        \Delta(v)=\det_{\mathbb{R}}F_{a,b}\cdot \det_{\mathbb{R}}G_{a,b}.
    \]

    To identify the first block, apply the Cayley--Dickson formula in
    $\mathbb{O}=\mathbb{H}\oplus\mathbb{H}e_{4}$:
    \[
        (a+b e_{4})(x+z e_{4})=(a x-\overline{z}\,b)+(z a+b\overline{x})e_{4}.
    \]
    Therefore, under the identification
    $\mathbb{H}\oplus\mathbb{H}e_{4}\cong\mathbb{H}^{2}$, we have
    \[
        L_{a+b e_{4}}(x,z)=\bigl(a x-\overline{z}\,b,\ z a+b\overline{x}\bigr)
        =F_{a,b}(x,z).
    \]
    Thus $F_{a,b}$ is exactly left multiplication by $a+b e_{4}$, and
    Lemma~\ref{lem:octdet} gives
    \[
        \det_{\mathbb{R}}F_{a,b}=\|a+b e_{4}\|^{8}
        =(\|a\|^{2}+\|b\|^{2})^{4}
        =D_{1}(v)^{4}.
    \]
    Here $\|a\|^{2}=x_{1}^{2}+r^{2}=\|v_{1}\|^{2}$ and
    $\|b\|^{2}=x_{2}^{2}+p^{2}+q^{2}=\|v_{2}\|^{2}$ in the chosen normal form.

    For the second block, write
    \[
        \alpha:=x_{1}+ir,\qquad \beta:=x_{2}+ip\in\mathbb{C}.
    \]
    Viewing $\mathbb{H}$ as a left $\mathbb{C}$-vector space via left
    multiplication by $e_{1}$, note that $\mathbb{C}=\operatorname{span}\{e_{0},e_{1}\}$
    lies in the associative algebra $\mathbb{H}$, so for every $\lambda\in\mathbb{C}$
    and $y,t\in\mathbb{H}$ we have
    \[
        G_{a,b}(\lambda y,\lambda t)=\lambda\,G_{a,b}(y,t).
    \]
    Thus $G_{a,b}$ is $\mathbb{C}$-linear. Right multiplication by
    $a=x_{1}e_{0}+r e_{1}$ acts as $\operatorname{diag}(\alpha,\overline{\alpha})$
    and right multiplication by $e_{1}$ acts as $\operatorname{diag}(i,-i)$.
    Relative to the induced complex basis of $\mathbb{H}^{2}$, the real
    $8\times 8$ matrix of $G_{a,b}$ is therefore the realification of
    the complex $4\times 4$ matrix
    \[
        \mathcal{G}(\alpha,\beta,q)=
        \begin{pmatrix}
            \alpha & 0 & \overline{\beta} & q \\
            0 & \overline{\alpha} & -q & \beta \\
            -\beta & q & \overline{\alpha} & 0 \\
            -q & -\overline{\beta} & 0 & \alpha
        \end{pmatrix}.
    \]
    Therefore
    \[
        \det_{\mathbb{R}}G_{a,b}=\bigl|\det_{\mathbb{C}}\mathcal{G}(\alpha,\beta,q)\bigr|^{2}.
    \]
    Appendix~\ref{app:canonical} gives the exact determinant formula
    \[
        \det_{\mathbb{C}}\mathcal{G}(\alpha,\beta,q)
        =\bigl(|\alpha|^{2}+|\beta|^{2}+q^{2}\bigr)^{2}-4(\operatorname{Im}\alpha)^{2}q^{2}.
    \]
    Since Appendix~\ref{app:canonical} also shows that
    $\det_{\mathbb{C}}\mathcal{G}(\alpha,\beta,q)$ is real, we may identify
    $\bigl|\det_{\mathbb{C}}\mathcal{G}(\alpha,\beta,q)\bigr|^{2}$ with
    $\bigl(\det_{\mathbb{C}}\mathcal{G}(\alpha,\beta,q)\bigr)^{2}$.
    Since
    \[
        |\alpha|^{2}=\|v_{1}\|^{2},\qquad
        |\beta|^{2}+q^{2}=\|v_{2}\|^{2},
    \]
    and
    \[
        (\operatorname{Im}\alpha)^{2}q^{2}
        =r^{2}q^{2}
        =\|u\|^{2}\|w\|^{2}-\langle u,w\rangle^{2},
    \]
    we obtain
    \[
        \det_{\mathbb{R}}G_{a,b}=D_{2}(v)^{2}.
    \]
    Hence
    \[
        \Delta(v)=D_{1}(v)^{4}D_{2}(v)^{2}.
    \]

    Finally, $v\neq 0$ is a left zero-divisor if and only if $\Delta(v)=0$, and
    Lemma~\ref{lem:D1norm} gives $D_{1}(v)=\|v\|^{2}>0$ for $v\neq 0$. Therefore
    \[
        \Delta(v)=0 \quad\Longleftrightarrow\quad D_{2}(v)=0.
    \]
\end{proof}

\begin{corollary}[Characterization of nonzero left zero-divisors]
    \label{cor:zerodiv} Write
    \[
        v=v_{1}+v_{2}e_{8},\qquad v_{1}=x_{1}e_{0}+u,\qquad v_{2}=x_{2}e_{0}+w,
        \qquad u,w\in\operatorname{Im}(\mathbb{O}).
    \]
    Then $v\neq 0$ is a left zero-divisor if and only
    if
    \[
        x_{1}=x_{2}=0,\qquad \|u\|=\|w\|,\qquad \langle u,w\rangle=0
    \]
    hold.
\end{corollary}

\begin{proof}
    This follows immediately from Theorem~\ref{thm:factor} and Lemma~\ref{lem:D2zero}.
\end{proof}

\subsection{Component expressions for D\texorpdfstring{$_{1}$}{1} and D\texorpdfstring{$_{2}$}{2}}

Write a sedenion $v\in S$ in components as
\[
    v = \sum_{m=0}^{15}a_{m} e_{m},\qquad a_{m}\in\mathbb{R}.
\]
Then $D_{1},D_{2}$ can also be expressed as polynomials in these components.

\begin{theorem}[Component expressions for the factors]
    \label{thm:component}
    \begin{align*}
        D_{1}(v) & = \sum_{m=0}^{15}a_{m}^{2}, \\
        D_{2}(v) & =
        \left(\sum_{m=0}^{15}a_{m}^{2}\right)^{2}
        -4\left[
        \left(\sum_{i=1}^{7}a_{i}^{2}\right)\left(\sum_{i=9}^{15}a_{i}^{2}\right)
        -\left(\sum_{i=1}^{7}a_{i}a_{i+8}\right)^{2}
        \right].
    \end{align*}
\end{theorem}

\begin{proof}
    For $D_{1}$, Lemma~\ref{lem:D1norm} gives
    \[
        D_{1}(v)=\|v\|^{2},
    \]
    which is clearly the sum of the squares of the components
    $\sum_{m=0}^{15}a_{m}^{2}$.

    For $D_{2}$, write
    \[
        v_{1}=a_{0}e_{0}+\sum_{i=1}^{7}a_{i}e_{i},\qquad
        v_{2}=a_{8}e_{0}+\sum_{i=1}^{7}a_{i+8}e_{i},
    \]
    so that
    \[
        x_{1}=a_{0},\qquad x_{2}=a_{8},
    \]
    \[
        \|u\|^{2}=\sum_{i=1}^{7}a_{i}^{2},\qquad
        \|w\|^{2}=\sum_{i=9}^{15}a_{i}^{2},\qquad
        \langle u,w\rangle=\sum_{i=1}^{7}a_{i}a_{i+8}.
    \]
    Substituting these into
    \[
        D_{2}(v)=\bigl(\|v_{1}\|^{2}+\|v_{2}\|^{2}\bigr)^{2}
        -4\bigl(\|u\|^{2}\|w\|^{2}-\langle u,w\rangle^{2}\bigr)
    \]
    gives exactly the stated component expression.
\end{proof}

\section{The Normalized Left Zero--Divisor Manifold}
\label{sec:stiefel}

By Corollary~\ref{cor:zerodiv}, every nonzero left zero-divisor has the form
\[
    v=u+w e_{8},
    \qquad
    u,w\in\operatorname{Im}(\mathbb{O}),
    \qquad
    \|u\|=\|w\|,\ \langle u,w\rangle=0.
\]
Thus the normalized left zero-divisor set lives in the imaginary octonions and
is governed by the Stiefel manifold in dimension $7$.

\begin{definition}[Normalized left zero-divisor manifold]
    \label{def:E} Define
    \[
        E:=\Bigl\{(u,w)\in\operatorname{Im}(\mathbb{O})\times\operatorname{Im}(\mathbb{O})
        \ \Bigm|\
        \|u\|=\|w\|=1,\ \langle u,w\rangle=0\Bigr\}.
    \]
\end{definition}

Since $\operatorname{Im}(\mathbb{O})\cong\mathbb{R}^{7}$, this is exactly the
Stiefel manifold
\[
    E\cong V_{2}(\mathbb{R}^{7}).
\]
Equivalently, the projection to the first vector,
\[
    \pi\colon V_{2}(\mathbb{R}^{7})\to S^{6},\qquad \pi(u,w)=u,
\]
has fiber
\[
    \pi^{-1}(u)=\{w\in u^{\perp}\subset\mathbb{R}^{7}\mid \|w\|=1\}\cong S^{5}.
\]
Thus $V_{2}(\mathbb{R}^{7})$ is an $S^{5}$--bundle over $S^{6}$.

\begin{theorem}[Structure of the set of left zero-divisors]
    \label{thm:global-structure} Let $Z\subset S\setminus\{0\}$ be the set of
    nontrivial left zero-divisors. Then
    \[
        Z \cong (0,\infty)\times V_{2}(\mathbb{R}^{7})
    \]
    as smooth manifolds. In particular,
    \[
        \dim Z = 1+\dim V_{2}(\mathbb{R}^{7})=1+(2\cdot 7-3)=12.
    \]
\end{theorem}

\begin{proof}
    Define
    \[
        \Phi\colon (0,\infty)\times V_{2}(\mathbb{R}^{7})\to Z,
        \qquad
        \Phi(r,(u,w)):=r(u+w e_{8}).
    \]
    Because $(u,w)$ is an orthonormal pair in $\operatorname{Im}(\mathbb{O})$,
    Corollary~\ref{cor:zerodiv} shows that $\Phi(r,(u,w))$ is a nontrivial left
    zero-divisor.

    Conversely, if $v\in Z$, then Corollary~\ref{cor:zerodiv} gives
    \[
        v=u+w e_{8},
        \qquad
        u,w\in\operatorname{Im}(\mathbb{O}),
        \qquad
        \|u\|=\|w\|=:r>0,\ \langle u,w\rangle=0.
    \]
    Hence
    \[
        v=\Phi\!\left(r,\left(\frac{u}{r},\frac{w}{r}\right)\right),
    \]
    and $\Phi$ is bijective with inverse
    \[
        \Psi(v)=\left(\|u\|,\left(\frac{u}{\|u\|},\frac{w}{\|u\|}\right)\right).
    \]
    Since $u\neq 0$ on $Z$, the norm $\|u\|$ and division by $\|u\|$ are smooth
    on $Z$. Hence $\Psi$ is smooth, and therefore $\Phi$ is a diffeomorphism.
\end{proof}

\section{A Purely Imaginary Cyclic Slice}
\label{sec:local}

A convenient place to see the quartic factor geometrically is the following
$3$--parameter slice inside the pure-imaginary subspace. On this slice
$D_{2}$ collapses to a quadratic cone.

\begin{definition}[Purely imaginary cyclic slice]
    For parameters $X,Y,Z\in\mathbb{R}$, define
    \[
        v_{1}(X,Y,Z)=(0,\,X,\,Y,\,Z,\,0,0,0,0),
    \]
    \[
        v_{2}(X,Y,Z)=(0,\,Y,\,Z,\,X,\,0,0,0,0),
    \]
    \[
        v(X,Y,Z):=v_{1}(X,Y,Z)+v_{2}(X,Y,Z)e_{8}\in S,
    \]
    and set
    \[
        L:=\{v(X,Y,Z)\mid X,Y,Z\in\mathbb{R}\}\subset S.
    \]
\end{definition}

\begin{lemma}
    \label{lem:norm-inner} On the slice $L$, we have
    \[
        \|v_{1}\|^{2}=\|v_{2}\|^{2}=X^{2}+Y^{2}+Z^{2},
    \]
    \[
        \langle v_{1},v_{2}\rangle=XY+YZ+ZX.
    \]
\end{lemma}

\begin{proof}
    This follows by direct inspection of the coordinates.
\end{proof}

\begin{theorem}[Local prototype on the cyclic slice]
    \label{thm:local} On $L$, the quartic factor is
    \[
        D_{2}(X,Y,Z)=4(XY+YZ+ZX)^{2}.
    \]
    Hence, on this slice, the left zero-divisor condition is
    \[
        XY+YZ+ZX=0.
    \]
\end{theorem}

\begin{proof}
    On the slice $L$, both scalar parts vanish, so $v_{1},v_{2}$ are purely
    imaginary. Therefore Definition~\ref{def:D1D2} reduces to
    \[
        D_{2}=(\|v_{1}\|^{2}-\|v_{2}\|^{2})^{2}+4\langle v_{1},v_{2}\rangle^{2}.
    \]
    Using Lemma~\ref{lem:norm-inner},
    \[
        \|v_{1}\|^{2}-\|v_{2}\|^{2}=0,\qquad
        \langle v_{1},v_{2}\rangle=XY+YZ+ZX,
    \]
    and hence
    \[
        D_{2}(X,Y,Z)=4(XY+YZ+ZX)^{2}.
    \]
    Corollary~\ref{cor:zerodiv} then shows that the left zero-divisor condition
    on this slice is exactly $XY+YZ+ZX=0$.
\end{proof}

Figure~\ref{fig:cyclic-slice-visuals} shows the same equation in two forms.
The three-dimensional rendering is the two-napped cone
$XY+YZ+ZX=0$, while the affine section $Z=1$ turns it into the hyperbola
$(X+1)(Y+1)=1$. This is the local picture of the vanishing set of $D_{2}$ on
the slice.

\begin{figure}[t]
    \centering
    \begin{subfigure}[t]{0.84\textwidth}
        \centering
        \includegraphics[width=\textwidth]{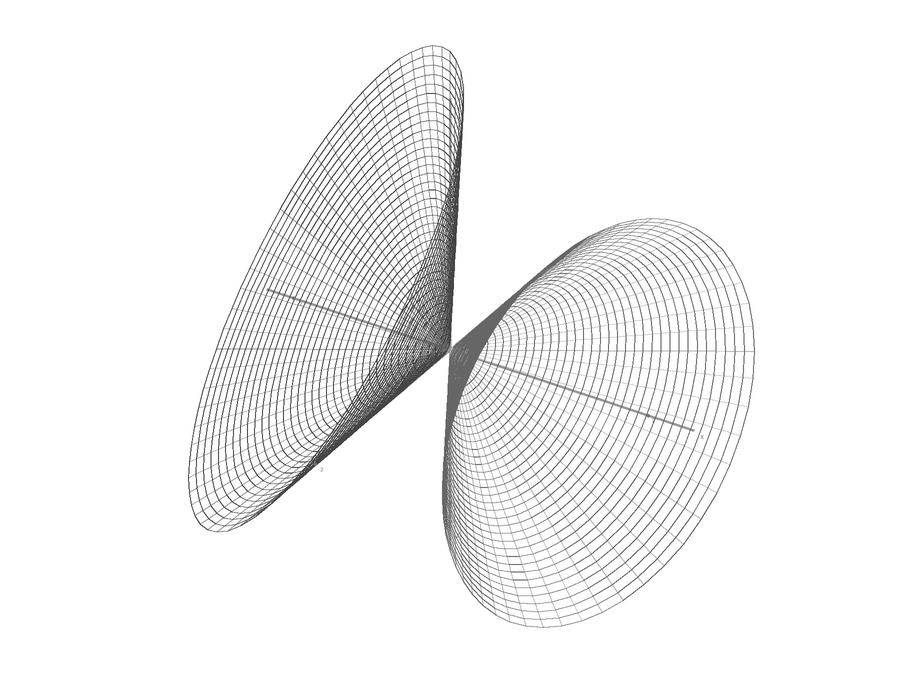}
        \caption{A three-dimensional rendering of the cyclic-slice zero-divisor cone
        $XY+YZ+ZX=0$. The two nappes reflect the real signature $(1,2)$ of the
        quadratic form.}
    \end{subfigure}

    \vspace{0.5em}

    \begin{subfigure}[t]{0.74\textwidth}
        \centering
        \includegraphics[width=\textwidth]{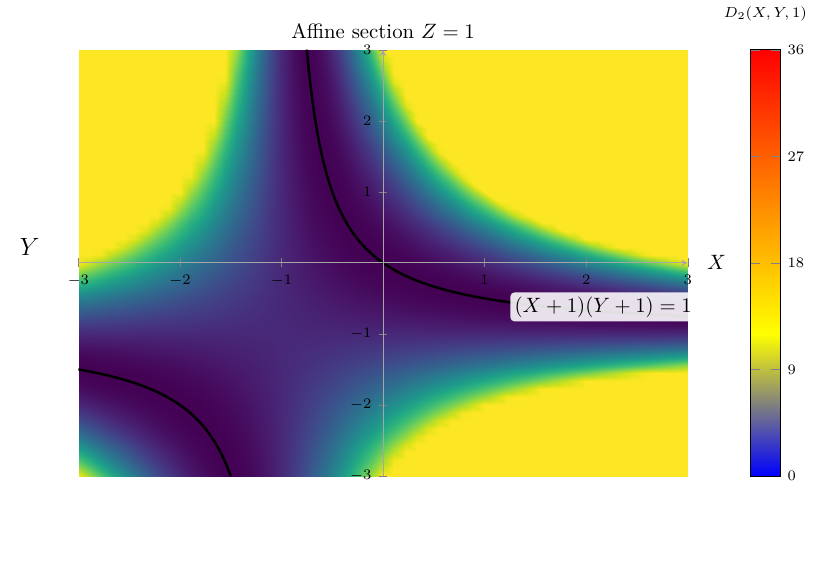}
        \caption{The affine section $Z=1$, where
        $D_{2}(X,Y,1)=4(XY+X+Y)^{2}$ and the zero set becomes the hyperbola
        $(X+1)(Y+1)=1$.}
    \end{subfigure}
    \caption{Visualizations of the quartic factor on the purely imaginary cyclic slice.}
    \label{fig:cyclic-slice-visuals}
\end{figure}

\begin{remark}[Singularity type]
    The quadratic form
    \[
        Q(X,Y,Z)=XY+YZ+ZX
    \]
    has Hessian
    \[
        H(Q)=
        \begin{pmatrix}
            0 & 1 & 1 \\
            1 & 0 & 1 \\
            1 & 1 & 0
        \end{pmatrix},
    \]
    whose eigenvalues are $2,-1,-1$. Hence $Q$ has real signature $(1,2)$.
    Thus the cone $Q=0$ has an isolated ordinary double point at the origin,
    i.e. an $A_{1}$ singularity on this slice; equivalently, its tangent cone at
    the origin is the same indefinite quadratic cone $Q=0$. In particular, on
    this three-dimensional slice the origin is the unique singular point of the
    zero-divisor cone.
\end{remark}

\section{Conclusion}
\label{sec:conclusion}

The determinant of left multiplication isolates the same
quartic condition that governs sedenion left zero-divisors. Once the
factorization of $\Delta(v)$ is in place, the classical description of the
nonzero zero-divisor locus, its normalized Stiefel model
$V_{2}(\mathbb{R}^{7})$, and the cyclic-slice cone $XY+YZ+ZX=0$ all follow
from the same polynomial factor.
In this way, the determinant viewpoint ties the global zero-divisor condition
to the local geometry seen on the cyclic slice.

\appendix

\section{The complex determinant in Theorem~\ref{thm:factor}}
\label{app:canonical}

In the notation of the proof of Theorem~\ref{thm:factor}, the block $G_{a,b}$
is the realification of
\[
    \mathcal{G}(\alpha,\beta,q)=
    \begin{pmatrix}
        \alpha & 0 & \overline{\beta} & q \\
        0 & \overline{\alpha} & -q & \beta \\
        -\beta & q & \overline{\alpha} & 0 \\
        -q & -\overline{\beta} & 0 & \alpha
    \end{pmatrix}.
\]
After simultaneously permuting rows and columns by the order $(1,3,2,4)$,
i.e. after conjugating by the corresponding permutation matrix, we obtain
\[
    \mathcal{G}'=
    \begin{pmatrix}
        \alpha & \overline{\beta} & 0 & q \\
        -\beta & \overline{\alpha} & q & 0 \\
        0 & -q & \overline{\alpha} & \beta \\
        -q & 0 & -\overline{\beta} & \alpha
    \end{pmatrix}.
\]
Thus $\det_{\mathbb{C}}\mathcal{G}'=\det_{\mathbb{C}}\mathcal{G}$. Write
\[
    \mathcal{G}'=
    \begin{pmatrix}
        A & B \\
        C & D
    \end{pmatrix},
\]
where
\[
    A=
    \begin{pmatrix}
        \alpha & \overline{\beta} \\
        -\beta & \overline{\alpha}
    \end{pmatrix},\quad
    B=
    \begin{pmatrix}
        0 & q \\
        q & 0
    \end{pmatrix},\quad
    C=
    \begin{pmatrix}
        0 & -q \\
        -q & 0
    \end{pmatrix},\quad
    D=
    \begin{pmatrix}
        \overline{\alpha} & \beta \\
        -\overline{\beta} & \alpha
    \end{pmatrix}.
\]
Set
\[
    s:=|\alpha|^{2}+|\beta|^{2}.
\]
If $s=0$, then $\alpha=\beta=0$ and a direct inspection gives
\[
    \det_{\mathbb{C}}\mathcal{G}(0,0,q)=q^{4},
\]
which already agrees with the desired formula. Assume now that $s\neq 0$.
Then
\[
    \det A=s,\qquad
    A^{-1}=\frac{1}{s}
    \begin{pmatrix}
        \overline{\alpha} & -\overline{\beta} \\
        \beta & \alpha
    \end{pmatrix},
\]
so the Schur complement formula gives
\[
    \det_{\mathbb{C}}\mathcal{G}(\alpha,\beta,q)
    =\det A\cdot\det(D-CA^{-1}B).
\]
A direct multiplication yields
\[
    CA^{-1}B
    =\frac{q^{2}}{s}
    \begin{pmatrix}
        -\alpha & -\beta \\
        \overline{\beta} & -\overline{\alpha}
    \end{pmatrix},
\]
hence
\[
    D-CA^{-1}B=
    \begin{pmatrix}
        \overline{\alpha}+\dfrac{\alpha q^{2}}{s}
        & \beta\!\left(1+\dfrac{q^{2}}{s}\right) \\
        -\overline{\beta}\!\left(1+\dfrac{q^{2}}{s}\right)
        & \alpha+\dfrac{\overline{\alpha} q^{2}}{s}
    \end{pmatrix}.
\]
Its determinant is
\begin{align*}
    \det(D-CA^{-1}B)
    &=
    \left(\overline{\alpha}+\frac{\alpha q^{2}}{s}\right)
    \left(\alpha+\frac{\overline{\alpha} q^{2}}{s}\right)
    +|\beta|^{2}\left(1+\frac{q^{2}}{s}\right)^{2} \\
    &=
    s+\frac{q^{2}}{s}\bigl(\alpha^{2}+\overline{\alpha}^{\,2}+2|\beta|^{2}\bigr)
    +\frac{q^{4}}{s}.
\end{align*}
Multiplying by $\det A=s$ gives
\[
    \det_{\mathbb{C}}\mathcal{G}(\alpha,\beta,q)
    =s^{2}+q^{2}\bigl(\alpha^{2}+\overline{\alpha}^{\,2}+2|\beta|^{2}\bigr)+q^{4}.
\]
Since $2s-2|\alpha|^{2}=2|\beta|^{2}$, this can be rewritten as
\[
    \det_{\mathbb{C}}\mathcal{G}(\alpha,\beta,q)
    =(s+q^{2})^{2}+q^{2}\bigl(\alpha^{2}+\overline{\alpha}^{\,2}-2|\alpha|^{2}\bigr).
\]
Since $\alpha^{2}+\overline{\alpha}^{\,2}-2|\alpha|^{2}
=(\alpha-\overline{\alpha})^{2}$, we obtain
\[
    \det_{\mathbb{C}}\mathcal{G}(\alpha,\beta,q)
    =(\alpha\overline{\alpha}+\beta\overline{\beta}+q^{2})^{2}
    +q^{2}(\alpha-\overline{\alpha})^{2}.
\]
Since $\alpha-\overline{\alpha}=2i\,\operatorname{Im}\alpha$, this is
\[
    \det_{\mathbb{C}}\mathcal{G}(\alpha,\beta,q)
    =\bigl(|\alpha|^{2}+|\beta|^{2}+q^{2}\bigr)^{2}
    -4(\operatorname{Im}\alpha)^{2}q^{2}.
\]
The right-hand side is real for all $\alpha,\beta,q$, so the determinant of the
realification is its square:
\[
    \det_{\mathbb{R}}G_{a,b}
    =\bigl(\det_{\mathbb{C}}\mathcal{G}(\alpha,\beta,q)\bigr)^{2}.
\]
This is the identity used in the proof of Theorem~\ref{thm:factor}.

\newpage
\bibliographystyle{plain}

\begin{thebibliography}{99}
    \bibitem{Baez2002} J.~C. Baez, \newblock The Octonions, \newblock {\em Bull. Amer. Math. Soc.}
        39 (2002), 145--205.

    \bibitem{ConwaySmith2003} J.~H. Conway and D.~A. Smith, \newblock
        {\em On Quaternions and Octonions: Their Geometry, Arithmetic, and Symmetry},
        \newblock A K Peters/CRC Press, 2003.

    \bibitem{Moreno1998} G.~Moreno, \newblock The zero divisors of the Cayley--Dickson
        algebras over the real numbers, \newblock {\em Bol. Soc. Mat. Mex.} (3) 4 (1998),
        13--27.

    \bibitem{Moreno2005Construct} G.~Moreno, \newblock Constructing zero divisors in the higher dimensional Cayley--Dickson algebras, \newblock arXiv:math/0512517, 2005.

    \bibitem{BissDuggerIsaksen2008} D.~Biss, D.~Dugger, and D.~C. Isaksen, \newblock Large annihilators in Cayley--Dickson algebras, \newblock {\em Comm. Algebra} \textbf{36} (2008), no.~2, 632--664.

    \bibitem{Reggiani2024} S.~Reggiani, \newblock The geometry of sedenion zero divisors, \newblock arXiv preprint arXiv:2411.18881, 2024.

    \bibitem{BenkartOsborn1977} G.~Benkart and J.~M. Osborn, \newblock Derivations and automorphisms of Cayley--Dickson algebras, \newblock {\em Trans. Amer. Math. Soc.} \textbf{225} (1977), 291--309.

    \bibitem{Okubo1978} S.~Okubo, \newblock Pseudo-quaternion and pseudo-octonion algebras, \newblock {\em Hadronic J.} \textbf{1} (1978), 138--167.

    \bibitem{Okubo1992} S.~Okubo, \newblock {\em Introduction to Octonion and Other Non-Associative Algebras in Physics}, \newblock Cambridge Univ. Press, 1992.

    \bibitem{Rassias1991} J.~M. Rassias, \newblock On the algebraic properties of Cayley--Dickson algebras, \newblock {\em Demonstratio Math.} \textbf{24} (1991), 679--694.

    \bibitem{AlbuquerqueMajid1999} H.~Albuquerque and S.~Majid, \newblock Quasialgebra structure of the octonions, \newblock {\em J. Algebra} \textbf{220} (1999), 188--224.

    \bibitem{Bryant2005} R.~Bryant, \newblock Some remarks on \(G_{2}\)-structures, \newblock in {\em Gökova Geometry--Topology Conference 2005}, 75--109.

    \bibitem{Schwarz1988} G.~W. Schwarz, \newblock Invariant theory of $G_{2}$ and Spin$(7)$, \newblock {\em Comment. Math. Helv.} \textbf{63} (1988), no.~4, 624--663.

    \bibitem{Schafer1995} R.~D. Schafer, \newblock {\em An Introduction to Nonassociative Algebras}, \newblock Dover Publications, 1995.
\end{thebibliography}

\end{document}